\newcommand{\comment}[1]{}
\newcommand{\eq}{\begin{equation}}
\newcommand{\en}{\end{equation}}
\newcommand{\NN}{\mathbb{N}}
\newcommand{\abs}[1]{\left\lvert #1 \right\rvert}
\newcommand{\combi}[2]{\begin{pmatrix}#1 \\ #2 \end{pmatrix}}
\newcommand{\bridge}{\text{Bridge}}
\newcommand{\exc}{\mathrm{Exc}}
\begin{document}

\theoremstyle{plain}
\newtheorem{thm}{Theorem}
\newtheorem{lemma}[thm]{Lemma}
\newtheorem{prop}[thm]{Proposition}
\newtheorem{cor}[thm]{Corollary}

\theoremstyle{definition}
\newtheorem{defn}{Definition}
\newtheorem{asmp}{Assumption}
\newtheorem{notn}{Notation}
\newtheorem{prb}{Problem}

\theoremstyle{remark}
\newtheorem{rmk}{Remark}
\newtheorem{exm}{Example}
\newtheorem{clm}{Claim}

\title[Brownian approximation]{Brownian approximation to counting graphs}

\author{Soumik Pal}
\address{Department of Mathematics\\ University of Washington\\ Seattle, WA 98195}
\email{soumik@u.washington.edu}

\keywords{Wright's formula, Area under Brownian excursion, counting graphs}

\subjclass[2000]{}

\thanks{This research is partially supported by NSF grant DMS-1007563}

\date{\today}

\begin{abstract} 
Let $C(n,k)$ denote the number of connected graphs with $n$ labeled vertices and $n+k-1$ edges. For any sequence $(k_n)$, the limit of $C(n,k_n)$ as $n$ tends to infinity is known. It has been observed that, if $k_n=o(\sqrt{n})$, this limit is asymptotically equal to the $k_n$th moment of the area under the standard Brownian excursion. These moments have been computed in the literature via independent methods. In this article we show why this is true for $k_n=o(\sqrt[3]{n})$ starting from an observation made by Joel Spencer. The elementary argument uses a result about strong embedding of the Uniform empirical process in the Brownian bridge proved by Koml\'os, Major, and Tusn\'ady.  
\end{abstract}

\maketitle

\section{Introduction} Let $C(n,k)$ denote the number of connected graphs with $n$ labeled vertices and $n+k-1$ edges. For example, $C(n,0)$ is the number of labeled trees on $n$ vertices and is equal to $n^{n-2}$ by Cayley's theorem. There is a rich history of the study of the asymptotics of the sequence $C(n,k)$. Wright gives the asymptotic formula when $k$ is fixed and $n\rightarrow \infty$ in \cite{W77} and when $k=o(\sqrt[3]{n})$ in \cite{W80}. Two different approaches were taken in analyzing the case when both $n,k\rightarrow \infty$, one by Bender, Canfield, and McKay \cite{brcm}, and the other by Coja-Oghlan, Moore, and Sanwalani \cite{coms}, and van der Hofstad and Spencer \cite{vdhs}. 

When $k=o(\sqrt{n})$ it has been observed that these limits (upto scaling) are also given by the moments of the area under a standard Brownian excursion. A standard Brownian excursion is a random element taking values in the subset of all nonnegative continuous functions on the interval $[0,1]$ (denoted by $C[0,1]$) given by 
\[
\left\{ \omega \in C[0,1]:\; \omega(0)=\omega(1)=0,\;\text{and}\; \omega(t) >0\; \text{for all $t \in (0,1)$}  \right\}.
\]
Informally, one can describe this process as a standard Brownian motion (starting from zero) conditioned to return to zero for the first time at time one. We refer the readers to the book by Revuz and Yor \cite{ry99} for a proper introduction. Another description of combinatorial interest is that a standard Brownian excursion is the contour of the Brownian Continuum Random Tree defined by Aldous \cite{A93}. In any case, the area under this random continuous curve is well-defined and measurable. Exact and asymptotic formulas for the moments of this random area can be found in the article by Louchard \cite{L} and the recent survey by Janson \cite{Jsurv}. 

Our main result is the following. 

\begin{thm}\label{thm:intro}
Let $A$ be a random variable whose law is given by the area under the standard Brownian excursion. Then, for all $(n,k)$ such that $k=k_n=o(\sqrt[3]{n})$ and $n\rightarrow \infty$ we have
\[
\lim_{n\rightarrow \infty} \frac{k!}{n^{n+3k/2-2}} \frac{C(n,k)}{ E A^k} = 1.
\]
\end{thm}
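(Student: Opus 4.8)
The plan is to reduce the enumeration to a moment computation for a random tree and then transfer that moment to the Brownian excursion by a strong coupling. Spencer's observation, which I would take as the starting point, is a combinatorial identity expressing $C(n,k)$ as a normalization times the expected number of ways to adjoin $k$ excess edges to a uniformly random rooted tree $T_n$ on $[n]$. Concretely, running a depth-first search of $T_n$ and recording its height profile, the admissible excess edges are the back edges, i.e.\ the strict ancestor--descendant pairs, whose total count is the discrete ``area'' $A_n := \sum_{v} \operatorname{ht}(v)$ under the search walk. To leading order the number of ways to place $k$ of them is $\binom{A_n}{k}$, so the identity takes the schematic form
\[
\frac{C(n,k)}{n^{n-2}} \;=\; E\!\left[\binom{A_n}{k}\right]\bigl(1+o(1)\bigr),
\]
the multiplicity coming from distinct spanning trees of one graph being absorbed into the error for $k=o(\sqrt[3]{n})$. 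Since $A_n$ is of order $n^{3/2}\gg k$ on the relevant event, $\binom{A_n}{k} = \tfrac{1}{k!}A_n^k(1+o(1))$, and the theorem reduces to the single moment asymptotic $n^{-3k/2}E[A_n^{\,k}] \longrightarrow E[A^k]$.

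The second step is to identify $n^{-3/2}A_n$ with the area under a normalized discrete excursion and pass to the Brownian excursion $A$. Here I would encode the search walk through uniform order statistics so that its rescaled profile agrees, up to negligible error, with the uniform empirical process $\alpha_n$; Donsker's theorem then gives $n^{-3/2}A_n \Rightarrow \int_0^1 e(t)\,dt = A$ after a Vervaat transformation passing from the (bridge-type) empirical process to the excursion $e$. Weak convergence alone is not enough: the theorem asserts convergence of the $k$th moments with $k$ growing, which needs strong tail control. This is exactly where the Koml\'os--Major--Tusn\'ady embedding enters, furnishing on a common probability space a Brownian bridge $B$ with $\sup_{t}\abs{\alpha_n(t)-B(t)} \le C\,(\log n)/\sqrt{n}$ almost surely. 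Integrating, and carrying the Vervaat transform through the coupling, yields the pathwise bound $\abs{n^{-3/2}A_n - A} \le \Delta_n$ with $\Delta_n$ of order $(\log n)/\sqrt n$ almost surely.

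With this coupling the third step is a binomial expansion,
\[
\frac{1}{n^{3k/2}}E\bigl[A_n^k\bigr] = E\bigl[A^k\bigr] + \sum_{j=1}^{k}\binom{k}{j} E\!\left[A^{k-j}\bigl(n^{-3/2}A_n-A\bigr)^{j}\right],
\]
and it remains to show the error sum is $o(E[A^k])$. Using $\abs{n^{-3/2}A_n-A}\le \Delta_n$, the growth $E[A^k]^{1/k} \asymp \sqrt{k}$ of the Airy-distribution moments, and the sub-Gaussian upper tail of $A$ (which controls $E[A^{k-j}]$), each term is bounded by a power of $\sqrt{k}\,\Delta_n$; summing, the relative error is governed by a quantity tending to $0$ in the target regime.

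The main obstacle is this last, uniform-in-$k$ error control. Three sources must be shown simultaneously negligible relative to $E[A^k]$: the replacement of $\binom{A_n}{k}$ by $A_n^k/k!$, the propagation of the $(\log n)/\sqrt n$ embedding error through the $k$th power, and---most delicately---the combinatorial error in Spencer's identity arising from graphs with many spanning trees and from excess edges that share endpoints or fail to be consistent with a single search order. The heuristic identity is expected to hold for $k=o(\sqrt{n})$, but closing all three estimates against the rapidly growing main term $E[A^k]$ forces the more conservative threshold $k=o(\sqrt[3]{n})$; making the constants in these bounds explicit and uniform in $k$ is the crux of the argument.
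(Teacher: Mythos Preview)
Your overall architecture matches the paper's: Spencer's identity, passage to the uniform empirical process, KMT embedding in a Brownian bridge, Vervaat re-rooting to get the excursion, and a moment comparison. Two points, however, are off and would break the argument as you have it.

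First, Spencer's identity is \emph{exact}, not approximate, and it is stated for the BFS queue walk, not a DFS height profile. With $Z_1,\ldots,Z_n$ i.i.d.\ Poisson$(1)$, $Q_i=1+\sum_{j\le i}(Z_j-1)$, and $M=\sum_{i=1}^{n-1}(Q_i-1)$, one has $C(n,k)=n^{n-2}\,E^*\bigl[\binom{M}{k}\bigr]$ on the nose, where $E^*$ conditions on $\{Q_i>0,\ i<n;\ Q_n=0\}$. Your DFS/back-edge count $\sum_v\mathrm{ht}(v)$ is a different statistic, and the ``spanning-tree multiplicity'' error you flag as the most delicate obstacle simply does not arise in the paper's route. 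Conditioning the queue walk on $Q_n=0$ is exactly what turns the Poisson points into $n-1$ i.i.d.\ uniforms, so the empirical-process step is immediate with no combinatorial residue.

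Second, the KMT coupling does \emph{not} give $\sup_t|\alpha_n(t)-B(t)|\le C(\log n)/\sqrt n$ almost surely for fixed $n$; it gives the tail bound $P(\sqrt n\,\Delta>a\log n+x)\le Ke^{-\lambda x}$. Integrating this against $kx^{k-1}$ yields
\[
\bigl(E\,\Delta^k\bigr)^{1/k}\ \le\ \frac{a\log n}{\sqrt n}+\frac{C\,k}{\sqrt n},
\]
and the second term is the one that matters. Combined with $(EA^k)^{1/k}\asymp\sqrt k$, the relative $L^k$ error is $|\epsilon|=O\bigl(\sqrt{k/n}\bigr)$, and one needs $k\epsilon\to 0$, i.e.\ $k^{3/2}/\sqrt n\to 0$, which is precisely $k=o(n^{1/3})$. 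If your almost-sure claim were true the threshold would improve to roughly $k=o(\sqrt n/\log n)$, which it does not. So the $n^{1/3}$ barrier comes from the exponential (not Gaussian) KMT tail, not from the combinatorial error you identified. The paper carries this out via the $L^k$ triangle inequality on $|M_n-A_n|$ rather than your binomial expansion, which is cleaner but equivalent.
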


It can be found in \cite[Page 90, eqn.~(53)]{Jsurv} that
\eq\label{eq:momentasymp}
E A^k \sim 3\sqrt{2} k \left( \frac{k}{12 e} \right)^{k/2}, \qquad \text{as $k\rightarrow \infty$}. 
\en
Hence, the previous theorem reproves the result of \cite{coms} and \cite[Sec 2.1]{vdhs} (when $k=o(\sqrt[3]{n})$):
\eq\label{eq:cnkfromexc}
C(n,k) \sim n^{n-2} n^{3k/2} \left( e/12k \right)^{k/2}\left( 3/\sqrt{\pi} \right)k^{1/2}.
\en

The first explanation between the connection of $C(n,k)$ with the area under the standard Brownian excursion was given in a beautiful paper by Spencer \cite{spen97}. Let $Z_1, \ldots, Z_n$ be independent Poisson random variables with mean one. Let $Y_j=\sum_{i=1}^j Z_i$ denote the partial sum process. Define the queue walk by $Q_0=1$, and
\eq\label{eq:poissonsum}
Q_i= Q_{i-1} + (Z_i-1)=Y_i - (i-1), \quad i=1,2,\ldots, n. 
\en
Define the event $\exc:=\{Q_i>0, \; i=1,\ldots, n-1, \; \text{and}\; Q_n=0  \}$. Consider the empirical area
\[
M= \sum_{i=1}^{n-1} \left(Q_i -1\right)=\sum_{i=1}^{n-1} \left(Y_i - i\right). 
\]
Let $E^*$ denote the expectation conditioned on the event $\exc$. Then (see \cite[Theorem 3.2]{spen97}) the following \textit{exact} relationship holds
\eq\label{eq:spencer}
C(n,k)= n^{n-2} E^*\left[ \combi{M}{k}  \right].
\en

Now, under proper scaling, the law of the queue walk under $\exc$ converges in distribution to the law of a standard Brownian excursion. The area under the curve is a continuous function of the curve under the uniform distance. And hence, under proper scaling, which is dividing by $n^{3/2}$, $M$ converges in distribution to the area under the standard Brownian excursion. 

The original article by Wright \cite{W77}, had identified the limiting value of $C(n,k)$, when $k$ is fixed and as $n$ tends to infinity, as $n^{n-2} n^{3k/2} c_k/ k!$, where this sequence of constants $(c_k,\; k\in \NN)$ came to be known as Wright's constants.

Spencer observed, but did not prove, that if the weak convergence of $M$ to that of $A$ can be strengthened to a convergence of moments of fixed order, this would explain the factor of $n^{3k/2}$ in Wright's expression, and provide an interpretation of Wright's constants as the moment sequence of $A$. 

The limits of all sequences $C(n,k_n)$ have been derived in \cite{brcm} by analytic combinatorial methods, and in \cite{coms, vdhs} by probabilistic methods. For any sequence $k_n=o\left( \sqrt{n}\right)$, they are still given by the formula \eqref{eq:cnkfromexc}. However, these methods neither employ nor shed any light on the Brownian approximation.  

The main result in this article argues this in the regime of $k_n=o(\sqrt[3]{n})$. Our main tool is an explicit coupling that is made feasible by a strong approximation result due to Koml\'os, Major, and Tusn\'ady (KMT). This strong approximation result is the Brownian bridge version of the more famous KMT embedding that approximates a random walk by a Brownian motion. 

Interestingly, the proof breaks down beyond $k_n=o(\sqrt[3]{n})$ for technical reasons and cannot be improved by the currently known version of the KMT result. It is curious that this is the same regime that Wright \cite{W80} could extend his argument.

\section*{Acknowledgement} I am grateful to Prof.~Joel Spencer for suggesting the problem to me and several useful discussion. I also thank an anonymous referee for a careful reading of a previous version of the paper which corrected an earlier error. 

\section{Proof of the main result}

We will use the following notation throughout: For any event $A$, the random variable $1\{ A\}$ takes the value one if $A$ occurs, and takes the value zero otherwise. 

\begin{thm}\label{thm:main}
Consider the queue walk and the event $\exc$ from \eqref{eq:poissonsum}. Consider the rescaled continuous time process
\eq\label{whatisxt}
X_n(t)= \frac{1}{\sqrt{n-1}}\left(Q_{i}-1\right), \quad i/(n-1) \le t < (i+1)/(n-1), \quad i=0,\ldots n-2.
\en
Also define $X_n(1)=0$. Then $X_n$ is a path, defined on $[0,1]$, that starts and ends at zero. 
Consider the empirical area
\eq\label{eq:whatismn}
M_n:= \int_0^1 X_n(t)dt= \frac{1}{(n-1)^{3/2}} \sum_{i=0}^{n-1} \left( Q_i - 1 \right). 
\en
Let $E^*$ denote the expectation conditioned on the event $\exc$. Let $A$ be the area under a standard Brownian excursion. For all $k=k_n=o\left( \sqrt[3]{n} \right)$, we have
\[
\lim_{n\rightarrow \infty} \frac{E^*\left( M_n \right)^k}{ E A^k} =1.
\]
\end{thm}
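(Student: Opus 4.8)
The plan is to place $M_n$ and $A$ on a common probability space through an explicit coupling, reduce the moment statement to a pathwise comparison, and then propagate that comparison through the binomial expansion using the known growth of $EA^k$. First I would strip the conditioning down to a random walk. Writing $S_i := Q_i - 1 = Y_i - i = \sum_{j\le i}(Z_j-1)$, the process $(S_i)$ is a mean-zero, variance-one random walk, $X_n$ is the piecewise-constant rescaling of $(S_i)$, and $M_n = (n-1)^{-3/2}\sum_{i=0}^{n-1}S_i$. The event $\exc$ is exactly the event that this walk is a discrete excursion: $S_0=0$, $S_i\ge 0$ for $1\le i\le n-2$, and (since $Q_n=0$ forces $Q_{n-1}=1$) $S_{n-1}=0$. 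Thus $E^*(M_n)^k$ is the $k$th moment of the area under a scaled discrete excursion, and the target is to compare it with the $k$th moment of the area $A$ under a standard Brownian excursion.

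Next I would build the coupling in two stages. Conditioning the i.i.d.\ Poisson increments on their total sum turns $(Z_1,\dots,Z_n)$ into a multinomial vector, i.e.\ the occupation counts of uniform samples in $n$ cells; the centered, rescaled partial-sum bridge then coincides with the uniform empirical process. This is precisely the object governed by the Brownian-bridge form of the KMT embedding, which furnishes a Brownian bridge $b$ with $\sup_t|X_n(t)-b(t)|\le \delta_n$, $\delta_n = O(n^{-1/2}\log n)$, off an event of probability $O(n^{-\gamma})$ for $\gamma$ as large as we wish. Imposing positivity, a Brownian bridge conditioned to remain nonnegative is a standard Brownian excursion $e$ with area $A=\int_0^1 e$. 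On the good event, integrating the sup bound and absorbing the $O(n^{-1/2})$ endpoint and scaling corrections yields a pathwise estimate $|M_n - A|\le \varepsilon_n$.

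With the coupling in hand I would expand
\[
E^*(M_n)^k - EA^k = \sum_{j=1}^{k}\combi{k}{j} E^*\!\left[ A^{k-j}(M_n-A)^j \right],
\]
bounding each correction on the good event by $\varepsilon_n^{\,j}\,E^*A^{k-j}$ and, on the complementary bad event, by a crude Cauchy--Schwarz estimate using the Gaussian tail of the excursion maximum together with $P(\exc)\sim c\,n^{-3/2}$. Normalizing by $EA^k$ and using $EA^k\sim 3\sqrt2\,k\,(k/12e)^{k/2}$ from \eqref{eq:momentasymp}, one has $E^*A^{k-j}/EA^k = O(k^{-j/2})$, so the $j$th term is of order $(\sqrt{k}\,\varepsilon_n)^j/j!$ and the whole ratio tends to $1$ precisely when $\sqrt{k}\,\varepsilon_n\to 0$.

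The main obstacle is controlling the coupling under the conditioning on the rare event $\exc$. Two difficulties compound: transferring the \emph{unconditional} KMT bound to the law conditioned on an event of probability $n^{-3/2}$, and matching the discrete positivity defining $\exc$ to the continuous positivity defining $e$. Because the walk and the bridge may disagree in sign within $\delta_n$ of zero, the effective area error $\varepsilon_n$ is governed by the near-zero behavior of the excursion rather than by $\delta_n$ alone, and it is this degradation of $\varepsilon_n$ above the raw KMT rate—balanced against the factorial growth of $EA^k$—that caps the argument at $\sqrt{k}\,\varepsilon_n\to 0$, i.e.\ $k=o(\sqrt[3]{n})$, and that the current $\log n$ rate in KMT cannot push further.
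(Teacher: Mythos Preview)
Your outline correctly identifies the multinomial/empirical-process representation of the walk conditioned on $\bridge$ and the Brownian-bridge KMT embedding, but it is missing the central idea that makes the argument go through: the \emph{Vervaat transform} (re-rooting at the minimum). You propose to pass from bridge to excursion by \emph{conditioning on positivity}, and then correctly flag as ``the main obstacle'' that this requires transferring the unconditional KMT bound to a law conditioned on an event of probability $\sim n^{-3/2}$ and matching discrete to continuous positivity. The paper never attempts this and the obstacle you describe is, as stated, not resolved in your plan. Instead, one applies the discrete and continuous Vervaat transforms \emph{simultaneously} to the KMT-coupled pair $(X_{n+1},B_n)$: the re-rooted walk is distributed as the walk under $\exc$, the re-rooted bridge is a standard Brownian excursion, and---crucially---re-rooting is an area-preserving cyclic shift up to subtracting the minimum. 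Hence the area difference after re-rooting is at most $2\Delta_{(n)}$, with $\Delta_{(n)}$ the sup-distance between the \emph{unconditioned} bridge pair. No conditioning on positivity, no near-zero sign analysis, no loss beyond the raw KMT error.

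This also changes the mechanism behind the $k=o(\sqrt[3]{n})$ barrier. It is not ``degradation of $\varepsilon_n$ above the raw KMT rate'' due to near-zero behavior; in the Vervaat approach there is no such degradation. Rather, the exponential tail \eqref{eq:kmtbnd} gives $(E\Delta_1^k)^{1/k}\le a\log n/\sqrt{n}+O(k)/\sqrt{n}$ because $\int_0^\infty x^{k-1}e^{-\lambda x}\,dx$ contributes $(k!)^{1/k}\sim k$. Combined with $(EA^k)^{1/k}\gtrsim\sqrt{k}$ from \eqref{eq:momentasymp}, the $L^k$ triangle inequality (which is cleaner here than your binomial expansion, though either works) yields $\abs{\epsilon}=O(\log n/\sqrt{kn})+O(\sqrt{k/n})$, and one needs $k\epsilon\to 0$, i.e.\ $k^{3/2}/\sqrt{n}\to 0$. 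So the cube-root threshold comes directly from the $e^{-\lambda x}$ tail in KMT, not from any positivity-matching step.
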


\bigskip
Let us give an outline of the steps of the proof backwards in the order in which they appear. 
\begin{enumerate}
\item[Step 3.] We prove an embedding of the partial sum process, conditioned on $\exc$, in a standard Brownian excursion and estimate errors.  
\item[Step 2.] Step 3 follows by a known KMT embedding of the empirical process (to be defined later) in a Brownian bridge. 
\item[Step 1.] We describe how Step 3 follows from Step 2 by \textit{re-rooting at the minimum}. 
\end{enumerate}
We now expand each step of the proof. 

\subsection{Re-rooting at the minimum} Consider a deterministic sequence of numbers $x := (x_1, \ldots,  x_n)$. The walk with steps $x$ is the sequence of partial sums of $x$ starting at zero. That is
\[
s_0=0, \quad s_{i}= s_{i-1} + x_{i}, \quad i=1,2,\ldots
\]
For $i \in \{1,2,\ldots, n\}$ let $x^{(i)}$ denote the $i$th cyclic shift of $x$, that is the sequence of length $n$ whose $j$th term is $x_{i+j}$ where $i+j$ refers to $(i + j)\bmod n$. The following lemmas stem from the classical ballot problem and is used by Tak\'acs in \cite{T} to prove Kemperman's formula. For more details and the proofs see \cite[Section 6.1]{combstoc}.

\begin{lemma}\label{lemma:cyclicshift}
Let $x:=(x_1, \ldots, x_n)$ be a sequence with values in $\{ -1,0,1,\ldots\}$ and sum $-1$. Let $\sigma=\min\{i:\; s_i=\min_{1\le j\le n} s_j  \}$, i.e., the first time the walk reaches its absolute minimum. Then the walk with steps $x^{(\sigma)}$ hits $-1$ for the first time at step n. Moreover, this is the only $i$ such that the walk with steps $x^{(i)}$ hits $-1$ for the first time at step n.   
\end{lemma}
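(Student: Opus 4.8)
The plan is to reduce the statement to elementary inequalities among the original partial sums $s_0,\dots,s_n$, exploiting that every step lies in $\{-1,0,1,\dots\}$, so all partial sums are integers and the walk decreases by at most one at each step. First I would record the effect of a cyclic shift. Writing $s^{(i)}_m$ for the $m$th partial sum of the walk with steps $x^{(i)}$, a direct computation splitting at the wrap-around point $m=n-i$ gives
\[
s^{(i)}_m =
\begin{cases}
s_{i+m}-s_i, & 1\le m\le n-i,\\
s_{i+m-n}-s_i-1, & n-i< m\le n,
\end{cases}
\]
where I have used $s_n=-1$. In particular $s^{(i)}_n=-1$ for every $i$, so the entire content of the lemma is the first-passage condition.

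Next I would observe that, because the steps are at least $-1$, the walk cannot drop below $0$ without first taking the value $-1$: if $m$ is the first index with $s^{(i)}_m<0$ then $s^{(i)}_{m-1}\ge 0$ and $s^{(i)}_m\ge s^{(i)}_{m-1}-1\ge -1$, forcing $s^{(i)}_m=-1$. Hence ``the walk $x^{(i)}$ hits $-1$ for the first time at step $n$'' is equivalent to the single requirement that $s^{(i)}_m\ge 0$ for all $1\le m\le n-1$. Feeding the displayed formula into this requirement and reading off the two cases separately yields the pair of conditions
\[
s_j\ge s_i\ \text{ for } i\le j\le n,
\qquad
s_l> s_i\ \text{ for } 1\le l\le i-1 ,
\]
the strictness in the second condition arising from the $-1$ shift together with integrality, since $s_{i+m-n}-s_i-1\ge 0$ reads $s_{i+m-n}\ge s_i+1$.

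I would then finish both assertions at once. The two conditions say precisely that $s_i$ equals the global minimum $\min_{1\le j\le n}s_j$ and that no strictly earlier index attains it; that is, $i$ is the first time the walk reaches its absolute minimum, i.e.\ $i=\sigma$. Taking $i=\sigma$ shows the re-rooted walk has the claimed first-passage property (existence), since by definition $s_j\ge s_\sigma$ for $j\ge\sigma$ and $s_l>s_\sigma$ for $l<\sigma$; conversely, the conditions force any admissible $i$ to equal $\sigma$, which gives uniqueness. The point requiring care is the bookkeeping across the wrap-around in the shift formula and, tied to it, retaining the \emph{non-strict} inequality for indices after $\sigma$ but the \emph{strict} inequality for indices before $\sigma$. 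This asymmetry is exactly what singles out the first minimizer rather than an arbitrary one, and it relies essentially on the integrality of the steps; it is the one place where a careless argument would lose uniqueness.
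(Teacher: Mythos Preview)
Your argument is correct. The paper itself does not supply a proof of this lemma; it simply cites \cite[Section~6.1]{combstoc} (Pitman's \emph{Combinatorial Stochastic Processes}) for the details, so there is no in-paper proof to compare against. What you have written is essentially the standard cycle-lemma computation one finds in that reference: express the shifted partial sums in terms of the original ones, use the step bound $x_j\ge -1$ to convert ``first hits $-1$ at time $n$'' into the nonnegativity condition $s^{(i)}_m\ge 0$ for $m<n$, and then read off that this forces $i$ to be the first global minimizer. Your handling of the wrap-around bookkeeping and of the strict/non-strict asymmetry (which is indeed what pins down the \emph{first} minimizer and hence gives uniqueness) is accurate. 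One cosmetic point: in your first displayed condition you write $i\le j\le n$, but the derivation only produces $i+1\le j\le n$; the case $j=i$ is of course trivially true, so nothing is lost.
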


We say the path $S$ with steps $x$ has been re-rooted at the minimum to denote the path $R(S)$ with steps $x^{(\sigma)}$. The following lemma, called the discrete Vervaat's transform, follows from above and the exchangeability of increments \cite[page 125]{combstoc}.

\begin{lemma}\label{lemma:exchangeable}
Suppose $X=(X_1, \ldots, X_n)$ denote a sequence of iid random variables with values in $\{-1,0,1,\ldots\}$. Let $S_0=0$, and $S_{i+1}=S_i+X_{i+1}$, for $i=0,1,\ldots,n-1$. We refer to the sequence $S=(S_0, S_1, \ldots, S_n)$ as a path. Let $\bridge$ denote the event $S_n=-1$. Then, if $S$ is a path, conditioned on \bridge, then $R(S)$ is distributed as a path conditioned on $\exc=\{ S_n=-1,\; S_i \ge0, \; 1\le i<n  \}$. 
\end{lemma}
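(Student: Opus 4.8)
\textbf{The plan is to prove Lemma \ref{lemma:exchangeable} from Lemma \ref{lemma:cyclicshift} together with the exchangeability of the increments.} The statement is the discrete Vervaat transform: re-rooting a path at its running minimum turns a bridge (conditioned on $S_n=-1$) into an excursion (conditioned on $\{S_n=-1,\;S_i\ge 0,\;1\le i<n\}$). The whole proof is combinatorial and rests on a single counting identity, so I would avoid any analytic machinery and argue directly at the level of individual increment sequences.

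\textbf{Step 1 (Reduce to a statement about deterministic increment sequences).} Fix a sequence $x=(x_1,\ldots,x_n)$ with values in $\{-1,0,1,\ldots\}$ and sum $-1$; this is exactly the event $\bridge$. I would invoke Lemma \ref{lemma:cyclicshift}: among the $n$ cyclic shifts $x^{(1)},\ldots,x^{(n)}$ of $x$, there is \emph{exactly one} index, namely $i=\sigma$, for which the corresponding walk hits $-1$ for the first time at step $n$. Observe that ``the walk with steps $x^{(i)}$ hits $-1$ for the first time at step $n$'' is precisely the event $\exc$ for that shifted sequence: since the sum is $-1$, the partial sums stay $\ge 0$ for all steps before $n$ and equal $-1$ at step $n$. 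Thus each bridge-type sequence $x$ has a unique cyclic shift lying in $\exc$, and that shift is exactly $x^{(\sigma)}$, i.e.\ $R$ applied to the path with steps $x$.

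\textbf{Step 2 (Exchangeability makes the $n$ shifts equally likely).} Now let $X=(X_1,\ldots,X_n)$ be the i.i.d.\ increments. Because the $X_i$ are i.i.d., the law of $X$ is invariant under cyclic rotation of coordinates; hence, conditioned on the value of the unordered multiset of increments, all $n$ cyclic shifts are equally likely. Conditioning on $\bridge$ preserves this symmetry, because $\bridge$ depends only on the \emph{sum} $\sum X_i=-1$, which is invariant under every cyclic shift. Therefore, given $\bridge$, the rotation index is uniform on $\{1,\ldots,n\}$ and independent of the multiset of increment values. I would make this precise by partitioning the event $\bridge$ into cyclic-equivalence classes of increment sequences and noting that each class either lies entirely inside $\bridge$ or entirely outside it, with all $n$ members carrying equal conditional probability.

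\textbf{Step 3 (Push forward and identify the image law).} Combining the two steps, I would compute the law of $R(S)$ under $P(\,\cdot\mid\bridge)$. For any path $s$ in $\exc$ with increment sequence $y$, the preimages under $R$ are exactly the $n$ cyclic shifts of $y$ (one of which is $y$ itself), and by Step 1 exactly one shift of any bridge sequence maps into $\exc$; moreover every $\exc$-sequence is a bridge-sequence, so the fibers of $R$ partition $\bridge$ into the cyclic classes of Step 2. Since each class contributes its $n$ equally-likely members to the single representative in $\exc$, the pushforward assigns to the $\exc$-path $s$ a probability proportional to $P(X=y \mid \bridge)$ summed over the class, which is exactly $P(X=y\mid \exc)$ after normalization. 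Hence $R(S)$ conditioned on $\bridge$ has the same law as $S$ conditioned on $\exc$, which is the claim.

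\textbf{Expected main obstacle.} The one delicate point is Step 2: I must be careful that conditioning on $\bridge$ does not break the cyclic symmetry. This works only because $\bridge$ is a rotation-invariant event (it constrains just the total sum), and because Lemma \ref{lemma:cyclicshift} guarantees a \emph{unique} good shift so that the fibers of $R$ are genuine cyclic-equivalence classes with no multiplicity issues. A subtlety I would flag is whether any sequence could have two distinct cyclic shifts coinciding (periodic sequences); Lemma \ref{lemma:cyclicshift} sidesteps this by asserting uniqueness of the first-passage-at-$n$ index regardless of periodicity, so even a periodic increment sequence has a well-defined single representative in $\exc$, and the bookkeeping of the pushforward goes through without correction factors.
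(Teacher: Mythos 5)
Your proposal is correct and takes essentially the same route as the paper, which proves this lemma only by the remark that it follows from Lemma \ref{lemma:cyclicshift} and the exchangeability of increments (citing \cite[page 125]{combstoc}); your three steps are precisely that one-line argument written out, with the fibers of $R$ identified as cyclic-equivalence classes made equally likely by the rotation-invariance of $\bridge$. One small sharpening: the periodicity worry in your final paragraph is actually vacuous, since a sequence summing to $-1$ cannot have period $p<n$ (the sum over one period multiplied by $n/p$ would have to equal $-1$, forcing $p=n$), which is exactly why all $n$ cyclic shifts are distinct and every fiber has uniform size $n$, so no correction factors arise.
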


A continuous version of this can be easily guessed \cite[page 125]{combstoc}. Recall that the Brownian bridge is a standard BM conditioned to be zero at time one. 

\begin{lemma}\label{lemma:vervaat}
Let $\sigma$ be the (almost surely) unique time when a Brownian bridge $B$ achieves its global minimum. Then the process given by the cyclic transformation $\{  B_{\sigma+t} - B_\sigma, \; 0\le t \le 1\}$ (understood $\bmod 1$) is a standard Brownian excursion. 
\end{lemma}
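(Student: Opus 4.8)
The plan is to derive this continuous identity as a scaling limit of the discrete Vervaat transform of Lemma~\ref{lemma:exchangeable}. I would apply that lemma to the increments $Z_i-1$ of the paper's own queue walk, which take values in $\{-1,0,1,\ldots\}$, have mean zero and unit variance, and so carry no parity obstruction. Writing $S_i=\sum_{j\le i}(Z_j-1)$, the event $\bridge=\{S_n=-1\}$ and the excursion event $\exc$ are exactly those appearing in Lemma~\ref{lemma:exchangeable}. After linear interpolation and the diffusive rescaling $i\mapsto i/n$, $\omega\mapsto \omega/\sqrt n$, Donsker-type invariance principles give that the conditioned bridge converges weakly in $C[0,1]$ to a standard Brownian bridge $B$ (the endpoint $-1/\sqrt n\to 0$ is harmless), while the conditioned excursion converges weakly to a standard Brownian excursion; the latter is the classical functional limit theorem for conditioned walks due to Iglehart and Kaigh.

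Next I would introduce the continuous re-rooting (Vervaat) map $V$ sending $\omega\in C[0,1]$ to $t\mapsto \omega(\{\sigma+t\})-\omega(\sigma)$, where $\sigma=\operatorname{argmin}\omega$ and $\{\cdot\}$ is the fractional part. The discrete transform of Lemma~\ref{lemma:exchangeable} says precisely that the re-rooted bridge $R(S)$ has the law of the excursion; on the rescaled paths this reads $V_n(\tilde S^{\bridge})\stackrel{d}{=}\tilde S^{\exc}$, where $V_n$ re-roots at the discrete argmin. Since the argmin of the linear interpolation differs from the discrete argmin by at most one lattice spacing, $V_n$ and $V$ agree up to a uniform error that is $o(1)$ on these paths. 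Hence the weak limit of $V(\tilde S^{\bridge})$ coincides with that of $\tilde S^{\exc}$, namely the standard Brownian excursion.

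The conclusion then follows from the continuous mapping theorem, provided $V$ is continuous at $B$-almost every path. The map $V$ is continuous at any $\omega$ whose minimum is attained at a unique point, so the real content is to show that the Brownian bridge attains its minimum at an almost surely unique location $\sigma$. I would argue this in the usual way: near any interior point the bridge is mutually absolutely continuous with respect to Brownian motion, and Brownian motion almost surely does not attain the same minimal value on two disjoint time intervals, so ties occur with probability zero. Given a.s.\ uniqueness of $\sigma$, the convergence $\tilde S^{\bridge}\Rightarrow B$ yields $V(\tilde S^{\bridge})\Rightarrow V(B)$, and matching this against the excursion limit of the previous paragraph gives $V(B)\stackrel{d}{=}$ the standard Brownian excursion, as claimed.

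The main obstacle I anticipate is exactly this continuity analysis: establishing almost-sure uniqueness of the bridge's minimizer, and then controlling the cyclic gluing at the seam $t=\sigma$ so that the discrete-to-continuous comparison of $V_n$ and $V$ is genuinely uniform in the sup-norm. The weak-convergence inputs and the combinatorial identity of Lemma~\ref{lemma:exchangeable} are the routine ingredients; all the analytic work lives in showing that re-rooting at the minimum is an almost-everywhere continuous functional under the bridge law.
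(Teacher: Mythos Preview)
The paper does not actually prove Lemma~\ref{lemma:vervaat}; it simply records it as a known fact with a pointer to \cite[page 125]{combstoc}. So there is no argument in the paper to compare against. Your proposal, by contrast, sketches a genuine derivation --- essentially Vervaat's own 1979 approach --- by passing the discrete identity of Lemma~\ref{lemma:exchangeable} to the scaling limit via invariance principles for the conditioned bridge and the conditioned excursion, and then invoking the continuous mapping theorem for the re-rooting functional $V$.

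The sketch is sound. The one analytic point worth spelling out carefully is the continuity of $V$: at a path $\omega$ with a unique minimizer, convergence $\omega_n\to\omega$ in sup-norm forces $\sigma_n\to\sigma$, and then uniform convergence of $V(\omega_n)$ to $V(\omega)$ follows once you use that $\omega(0)=\omega(1)$, so that the $1$-periodic extension of $\omega$ is continuous and the wraparound at the seam causes no jump. For the prelimit paths the endpoint mismatch is $-1/\sqrt{n}$, which is absorbed in the error. Your treatment of the a.s.\ uniqueness of the bridge minimizer via local absolute continuity with Brownian motion is standard and correct. In short: your plan would yield a valid proof, while the paper is content to quote the result.
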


To use these results for our problem we need to understand the event $\bridge$ for the queue walk. Consider a unit rate Poisson point process on the half line $[0,\infty)$. That is, consider iid Exponential random variables $\{ T_i, \; i \in \mathbb{N}\}$ with mean one, and consider their partial sums $S_0=0$, and $S_n = \sum_{i=1}^n T_i$, $n\ge 1$. We say an `event' occurs at each `time point' $S_n$. Then, one can define $Z_i$ to be the number of events occurred during the interval $[i-1, i)$, for $i\ge 1$. This maintains that $Z_1, Z_2, \ldots$ are iid Poisson random variables with mean one. 

The event $\{Q_n=0\}=\{Y_n=n-1\}$ is equivalent to the statement that there are $(n-1)$ events in time interval $[0,n]$. It is well-known, that conditioned on the number of events in a given interval, their points of occurrences, under the Poisson process, are independently and uniformly distributed. Thus, under $\bridge$, the times of occurrences of the $(n-1)$ events are given by $(n-1)$ iid Uniform$(0,1)$ random variables multiplied by $n$. 
  
Let $U_1, U_2, \ldots,$ denote a countable sequence of iid Uniform$(0,1)$ random variables. Consider the process
\eq\label{eq:whatisf}
F_{n-1}(t):= \sum^{n-1}_{i=1}1\left\{ U_i \le t  \right\} - nt, \quad t \in [0,1].
\en
Then the sequence $(F_{n-1}(k/n), \; k=0,1,2,\ldots,n)$ has the same law as $(Q_k-1,\; k=0,1,2,\ldots,n)$ under $\bridge$. Note that we have suppressed the dependence of $n$ from $Q_k$. In what follows the correct $n$ will be obvious from the context.
Also define what is known as the \textit{empirical process}
\eq\label{eq:whatisgn}
G_{n}(t):= \sqrt{n}\left( \frac{1}{n}\sum^{n}_{i=1}1\left\{ U_i \le t  \right\} - t\right) = \frac{1}{\sqrt{n}} \left( F_n(t) + t \right), \quad t\in [0,1]. 
\en

\subsection{Strong embedding for the empirical process.} The famous Koml\'os-Major-Tusn\'ady (KMT) paper \cite{kmt} contains a result on the strong embedding of the empirical process in the Brownian bridge. Several variations of the result have since been discovered due to its importance in the empirical process theory. See \cite{cchm}, \cite{mz87}. We take the statement of the following lemma from \cite{mz87}.  

\begin{lemma}\label{lemma:strong}
There is a probability space on which one can define random variables $U_1, U_2, \ldots$ that are iid Uniform $(0,1)$ and a sequence of processes $B_1, B_2, \ldots$ that are each distributed as a standard Brownian bridge such that if we define
\[
\Delta_1:=\Delta_1(n)= \sup_{0\le s \le 1}\abs{G_n(s) -  B_n(s)}, 
\]
then there exist universal positive constants $a,K,\lambda$ such that
\eq\label{eq:kmtbnd}
P\left( \sqrt{n} \Delta_1 > a \log n + x  \right) < K e^{- \lambda x}, \quad \text{for all}\; x\in [0, \infty). 
\en
\end{lemma}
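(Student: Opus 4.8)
The plan is to carry out the Hungarian dyadic coupling construction of Komlós, Major, and Tusnády, which builds the uniform sample $U_1,U_2,\ldots$ and the bridges $B_1,B_2,\ldots$ simultaneously on one probability space. Fix $n$ and encode the empirical measure of $U_1,\ldots,U_n$ through its counts on the dyadic subintervals of $[0,1]$. Organize these counts on a binary tree: the root records the total $n$, and a node indexing an interval $I$ of length $2^{-\ell}$ is split into its two dyadic halves. Conditionally on the count $n_I$ in $I$, the number landing in the left half is $\mathrm{Bin}(n_I,1/2)$, and across the $2^{\ell}$ intervals at a fixed level these splits are conditionally independent. The Brownian bridge admits the parallel description: conditionally on its values at the two endpoints of $I$, its recentered value at the midpoint is an independent centered Gaussian, with variance given by the standard Brownian interpolation formula, and across a level these midpoint increments are conditionally independent. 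I would construct both objects level by level down the tree, at each node coupling the binomial split to the corresponding Gaussian increment by the monotone (quantile) coupling, i.e.\ realizing both as increasing functions of a common uniform. This produces a genuine Brownian bridge $B_n$ and the empirical process $G_n$ on the same space, with the correct marginals.

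The engine of the coupling is Tusn\'ady's quantile inequality. If $S\sim\mathrm{Bin}(m,1/2)$ and $N\sim N(0,1)$ are coupled monotonically, then
\[
\left\lvert\, 2S-m-\sqrt{m}\,N \,\right\rvert \le C_1 + C_2\,\frac{N^2}{\sqrt{m}}
\]
for universal constants $C_1,C_2$. Applied at the node indexed by $I$, this bounds the discrepancy between the recentered, rescaled count in the left half and the bridge increment across $I$ by a constant plus a term quadratic in that node's Gaussian variable and damped by $1/\sqrt{n_I}$. The crucial point is that the per-node error is not merely $O(1)$ but carries an explicitly sub-exponential correction, which is what eventually yields exponential tails rather than polynomial ones.

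With the per-node bounds in hand I would control the discrepancy $\sup_{k,\ell}\lvert G_n(k2^{-\ell}) - B_n(k2^{-\ell})\rvert$ on the dyadic grid by summing node errors along each root-to-leaf path. There are at most $\log_2 n$ levels, so the accumulated constant terms contribute the deterministic $a\log n$ rate in \eqref{eq:kmtbnd}. The accumulated quadratic Gaussian terms form a sum of sub-exponential variables; taking a maximum over the $O(n)$ nodes and invoking a union bound together with the sub-exponential tail of each term produces a bound of the form $Ke^{-\lambda x}$ for the excess over $a\log n$. Finally I would pass from the finest grid, of spacing $1/n$, to the full supremum over $[0,1]$ by bounding the oscillation of $G_n$ and of $B_n$ on intervals of length $1/n$; both oscillations are of smaller order than $\log n/\sqrt{n}$ with the same exponential tails, so they are absorbed into the stated estimate.

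The main obstacle is Tusn\'ady's lemma itself, whose proof demands sharp, uniform Stirling-type control of binomial tail probabilities so that the quadratic correction appears with the correct power of $m$ and with universal constants; this is the delicate analytic heart of the argument. The secondary difficulty is the bookkeeping in the error accumulation: one must verify that summing $O(n)$ sub-exponential node errors, with their level-dependent damping, still collapses to a single $\log n$ rate with a clean exponential tail, which forces the maximal inequality over the tree to be organized level by level rather than naively over all nodes at once. Since the statement is quoted from \cite{mz87}, in practice I would cite that reference for the explicit constants $a,K,\lambda$ rather than re-derive them, and use the construction above only to explain why a coupling with these properties exists.
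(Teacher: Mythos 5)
The paper does not actually prove this lemma: it is taken verbatim from Mason and van Zwet \cite{mz87} (``We take the statement of the following lemma from \cite{mz87}''), so your closing move --- cite \cite{mz87} for the constants and use the dyadic construction only to explain why such a coupling exists --- coincides exactly with what the paper does, and your sketch is a faithful outline of how the cited result is proved. That said, three points in the sketch need correction or sharpening. First, you have misquoted Tusn\'ady's inequality: the classical bound for the quantile coupling of $S\sim\mathrm{Bin}(m,1/2)$ with $N\sim N(0,1)$ is $\abs{2S-m-\sqrt{m}\,N}\le C_1+C_2N^2$, with \emph{no} $1/\sqrt{m}$ damping on the quadratic term. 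Your damped version is false in the far tail: at $S=m$ the quantile coupling forces $\sqrt{m}\,N$ to be of order $m$, so the discrepancy is of order $m$, while $C_1+C_2N^2/\sqrt{m}$ is only of order $\sqrt{m}$. The undamped form still suffices for the architecture you describe: each root-to-leaf path accumulates $O(\log n)$ terms $C_1+C_2N_\ell^2$ with independent Gaussians, and the chi-square tail of $\sum_\ell N_\ell^2$ together with a union bound over the $O(n)$ leaves yields exactly the $a\log n + x$ rate with tail $Ke^{-\lambda x}$.

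Second, the ``bookkeeping'' you gesture at is more specific than summing node errors: Tusn\'ady at the node indexed by $I$ compares the count against a Gaussian of scale $\sqrt{n_I}$, whereas the bridge's midpoint displacement has scale proportional to $\sqrt{n\abs{I}}$; since $n_I$ fluctuates around $n\abs{I}$, the mismatch $(\sqrt{n_I}-\sqrt{n\abs{I}})N$ must be propagated down the tree, and controlling this recursive error propagation is the genuinely delicate part of the KMT argument. Third, the lemma as stated is sequential --- a single sequence $U_1,U_2,\ldots$ serving every $n$ at once --- while your construction, run at fixed $n$, generates the uniforms from the bridge and hence would produce a different sample for each $n$; to match the statement one reverses the roles, realizing $B_n$ as a measurable function of $U_1,\ldots,U_n$ and independent auxiliary randomness via conditional quantile transforms, which is how \cite{mz87} place all $n$ on one probability space. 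None of these issues affects the present paper, which only ever uses the coupling at a single $n$ and, like you, ultimately rests on the citation.
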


Consider now a path of $G_n$ and $B_n$ as defined on the probability space above. We now have three processes: the process $X_{n+1}$ as defined in \eqref{whatisxt}, the empirical process $G_n$ and the Brownian bridge $B_n$. We know by Lemma \ref{lemma:strong} that the supremum distance between the processes $G_n$ and $B_n$ is $\Delta_1$. 

We now estimate the supremum distance between $X_{n+1}$ and $G_n$. Note from the line following \eqref{eq:whatisf}
\[
X_{n+1}(t)= \frac{1}{\sqrt{n}} F_n\left( \frac{k}{n+1} \right), \quad \frac{k}{n+1}\le t < \frac{k+1}{n+1},\quad k=0,1,\ldots,n.
\]
Subtracting, from \eqref{eq:whatisgn}, we get
\[
\abs{X_{n+1}(t) - G_n(t) } \le \frac{1}{\sqrt{n}} \abs{F_n(t) - F_n\left( \frac{k}{n+1} \right)}.
\]
Using \eqref{eq:whatisgn} again, for $k/(n+1) \le t < (k+1)/(n+1)$, we have 
\[
\abs{X_{n+1}(t) - G_n(t) } \le \abs{G_n(t) - G_n\left(\frac{k}{n+1} \right)} + \frac{1}{(n+1)\sqrt{n}}. 
\]

Taking supremum on both sides above and using Lemma \ref{lemma:strong} we get
\[
\begin{split}
\Delta_2&:=\sup_{0\le t\le 1} \abs{X_{n+1}(t) - G_n(t) } \\
&  \le \frac{1}{\sqrt[3]{n}} + \sup_{0\le k \le n}\sup_{k< (n+1) t < k+1} \abs{B_n(t) - B_n(k/(n+1))} + 2\Delta_1.
\end{split}
\]

Express the Brownian bridge $B_n$ as $(\beta_n(t) - t \beta_n(1),\; 0\le t\le 1)$, for some standard Brownian motion $\beta_n$ (see \cite[page 37]{ry99}). Then
\[
\begin{split}
\sup_{k/(n+1)<  t < (k+1)/(n+1)}& \abs{B_n(t) - B_n(k/(n+1))} \le \\
&\sup_{k/(n+1)< t < (k+1)/(n+1)} \abs{\beta_n(t) - \beta_n(k/(n+1))} + \frac{\abs{\beta_n(1)}}{n+1}.
\end{split}
\]
By the Markov property of Brownian motion we know that for each $k$, the quantity 
\[
\abs{Z_k}/\sqrt{n+1}:=\sup_{k/(n+1)\le t\le (k+1)/(n+1)}\abs{\beta_n(t) - \beta_n(k/(n+1))}
\]
are independent and identically distributed. In fact, by the stationary increment property of Brownian motion and scaling, this distribution is the same as that of $\abs{\beta_n}^*=\sup_{0\le s \le 1} \abs{\beta_n(s)}$. Moreover, by Paul L\'evy's Characterization Theorem (\cite[page 240]{ry99}), we know that $\overline{\beta_n}:=\sup_{0\le s \le 1} \beta_n(s)$ and $\underline{\beta_n}:=\sup_{0\le s\le 1} -\beta_n(s)$ are both distributed as the absolute value of a standard Normal random variable $N$. Observe that, for any positive $x$,
\[
P(\abs{N} > x)= P(\overline{\beta_n} > x) \le   P\left( \abs{\beta_n}^* > x \right) \le P(\overline{\beta_n} > x) + P\left( \underline{\beta_n} > x \right)= 2P(\abs{N} > x).
\]
Thus the distribution of each $Z_k$ (and all their moments) are comparable to that of the absolute value of the standard Normal. We will use this fact implicitly in the following argument. In any case
\[
\Delta_2 \le \frac{1}{\sqrt[3]{n}} + \frac{1}{\sqrt{n+1}} \max_{0\le i \le n} \abs{Z_i} + \frac{\abs{\beta_n(1)}}{n+1}+ 2 \Delta_1. 
\]

Hence, the supremum distance between the continuous time walk $X_{n+1}$ and the Brownian bridge 
\eq\label{eq:supest}
\Delta_{(n)}:= \sup_{0\le t \le 1}\abs{X_{n+1}(t) - B_n(t)}\le \Delta_1 + \Delta_2=  \frac{1}{\sqrt[3]{n}} + \frac{1}{\sqrt{n+1}} \max_{0\le i \le n} \abs{Z_i} + \frac{\abs{\beta_n(1)}}{n+1}+ 3 \Delta_1. 
\en
\bigskip

\begin{figure}[t]
\centering
\includegraphics[width=5in, height=1.6in]{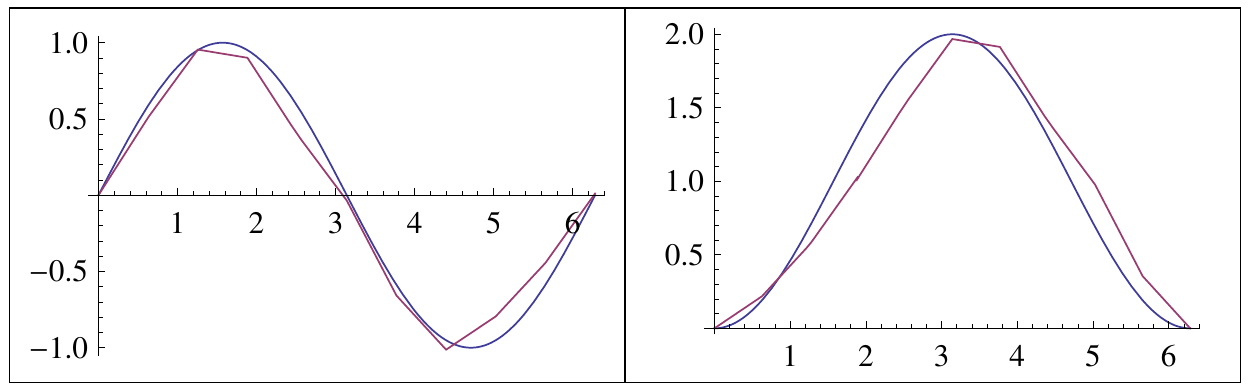}
\caption{Re-rooting at the minimum}
\label{fig:reroot}
\end{figure}

We now re-root at the minimum both the continuous time walk $X_{n+1}$ and the Brownian bridge $B_n$. Please see Figure \ref{fig:reroot} where the smooth curve is the sine curve which is approximated by a jagged path. By \eqref{eq:supest} their minimums differ by $\Delta_{(n)}$ and can be attained at different times $\sigma^{w}$ (for the walk) and $\sigma^{b}$ (for the Brownian bridge). Depending on how close $\sigma_w$  and $\sigma_b$ are, after re-rooting we get a walk excursion and a standard Brownian excursion which might be a little off. However, this does not affect the total area. 

To see what we mean, suppose $y$ is a continuous curve on $[0,1]$ such that $y(0)=y(1)=0$ and with an absolute minimum $y_{\min}$. Then the area under the curve $\tilde{y}(\cdot):= y(\cdot) + y_{\min}$ is equal to the area under the curve that is obtained by re-rooting $y$ at its minimum. The difference between the minimums of the walk and the Brownian bridge is at most $\Delta_{(n)}$, which is also the uniform distance between the two curves. Thus the area between the two re-rooted curves differ by at most $2 \Delta_{(n)}$. We have the following lemma.

\begin{thm}\label{thm:space}
Consider the set-up in Theorem \ref{thm:main}. On the KMT space described in Lemma \ref{lemma:strong}, it is possible to have for each $n\in \NN$ a copy of $M_n$, under $\exc$, and a random variable $A_n$, distributed according to the area of a standard Brownian excursion such that if $D_n=\abs{M_n - A_n}$, then there exists two absolute positive constants $C_1$ and $C_2$ such that for all $k,n \in \NN$ we have
\[
\left[ E D_n^k\right]^{1/k} \le C_1 \left( \frac{\log n}{\sqrt{n}} \right) + C_2 \left( \frac{k}{\sqrt{n}}\right), 
\]

\end{thm}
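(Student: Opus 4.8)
The plan is to realize both $M_n$ and $A_n$ as areas of curves obtained by \emph{re-rooting} the two coupled processes $X_{n+1}$ and $B_n$ that live on the KMT space of Lemma \ref{lemma:strong}, and then to control the difference of these areas purely through the uniform distance $\Delta_{(n)}$ estimated in \eqref{eq:supest}.

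First I would fix the coupling. On the KMT space, let $X_{n+1}$ be the rescaled walk of \eqref{whatisxt} built from $U_1,\dots,U_n$ and let $B_n$ be the associated Brownian bridge; both paths vanish at $0$ and $1$. Re-root each at its minimum (a.s. unique for $B_n$). By Lemma \ref{lemma:exchangeable} the re-rooted walk is distributed as the queue walk under $\exc$, so I take its area to be my copy of $M_n$; by Lemma \ref{lemma:vervaat} the re-rooted bridge is a standard Brownian excursion, so I take its area to be $A_n$. Using the elementary identity noted before the statement — that the area of a curve $y$ re-rooted at its minimum equals $\int_0^1 y - \min y$ — I obtain
\[
D_n = |M_n - A_n| \le \Big| \int_0^1 \big(X_{n+1}-B_n\big)\Big| + \big|\min X_{n+1} - \min B_n\big| \le 2\Delta_{(n)},
\]
since both the integral of the difference and the gap between the two minima are dominated by $\sup_t|X_{n+1}-B_n| = \Delta_{(n)}$. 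Hence $[E D_n^k]^{1/k} \le 2\,[E\Delta_{(n)}^k]^{1/k}$, and the whole problem reduces to an $L^k$ bound on $\Delta_{(n)}$.

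Next I would apply Minkowski's inequality to the four terms on the right-hand side of \eqref{eq:supest}, so that $[E\Delta_{(n)}^k]^{1/k}$ is at most the sum of the $L^k$ norms of the deterministic discretization term (of order $n^{-1/2}$), of $\tfrac{1}{\sqrt{n+1}}\max_{0\le i\le n}|Z_i|$, of $\tfrac{|\beta_n(1)|}{n+1}$, and of $3\Delta_1$. The Gaussian term is easiest: $\|\beta_n(1)\|_k \asymp \sqrt k$, so after division by $n+1$ it is $O(k/\sqrt n)$ and negligible. For $\Delta_1$ I would convert the tail bound \eqref{eq:kmtbnd} into moments: writing $\sqrt n\,\Delta_1 \le a\log n + (\sqrt n\,\Delta_1 - a\log n)_+$ and integrating the exponential tail $Ke^{-\lambda x}$ gives $\|\sqrt n\,\Delta_1\|_k \le a\log n + Ck$, i.e. $\|\Delta_1\|_k \le a\,\tfrac{\log n}{\sqrt n} + C\,\tfrac{k}{\sqrt n}$.

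The main obstacle is the maximal term $\tfrac{1}{\sqrt{n+1}}\max_{0\le i\le n}|Z_i|$. Here I would use that the $Z_i$ are iid with sub-Gaussian tails, $P(|Z_i|>x)\le 2e^{-x^2/2}$ (the comparison with $|N|$ established just before \eqref{eq:supest}). A tail integration of $P(\max_i|Z_i|>x)\le 2(n+1)e^{-x^2/2}$, split at the threshold $x_0=\sqrt{2\log(2(n+1))}$, should yield the sub-Gaussian maximal bound $\big\|\max_{0\le i\le n}|Z_i|\big\|_k \le C\big(\sqrt{\log n}+\sqrt k\big)$; dividing by $\sqrt{n+1}$ and crudely bounding $\sqrt{\log n}\le \log n$, $\sqrt k\le k$ puts this term inside $C\tfrac{\log n}{\sqrt n}+C\tfrac{k}{\sqrt n}$ as well. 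Collecting the four contributions, every piece is of the form $\alpha\,\tfrac{\log n}{\sqrt n}+\beta\,\tfrac{k}{\sqrt n}$, so summing the constants gives $[E\Delta_{(n)}^k]^{1/k}\le C_1'\tfrac{\log n}{\sqrt n}+C_2'\tfrac{k}{\sqrt n}$ and, after the factor $2$, the claimed inequality. I expect the delicate points to be exactly these two moment conversions — the exponential one for $\Delta_1$ and, above all, the sub-Gaussian maximal inequality in $L^k$ — since both must produce the correct linear-in-$k$ and logarithmic-in-$n$ growth uniformly over the range $k=o(\sqrt[3]{n})$.
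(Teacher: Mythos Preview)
Your proposal is correct and follows essentially the same route as the paper: couple $X_{n+1}$ and $B_n$ on the KMT space, re-root both at the minimum to produce $M_n$ and $A_n$, bound $D_n\le 2\Delta_{(n)}$, and then apply Minkowski to the four terms of \eqref{eq:supest}, converting the KMT tail \eqref{eq:kmtbnd} and the sub-Gaussian maximal tail into $L^k$ bounds. The only cosmetic difference is that for $\max_i|Z_i|$ the paper invokes Gaussian concentration about the mean $\mu_n=O(\sqrt{\log n})$ to get $\|\max_i|Z_i|\|_k\le \mu_n+C^*\sqrt{k}$, whereas you do the threshold-split tail integration directly; both yield the same $\sqrt{\log n}+\sqrt{k}$ estimate.
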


\begin{proof}[Proof of Theorem \ref{thm:space}] From \eqref{eq:supest}, the discussion above, and by using the triangle inequality we get
\eq\label{eq:triangle}
\begin{split}
\left(E D_n^{k}\right)^{1/k} &\le  2 \left[ E\left( \frac{1}{\sqrt[3]{n}} + \frac{1}{\sqrt{n+1}} \max_{0\le i \le n} \abs{Z_i} + \frac{\abs{\beta_n(1)}}{n+1}+ 3 \Delta_1   \right)^k\right]^{1/k}\\
&\le 2\left[ \frac{1}{\sqrt[3]{n}}   + \frac{1}{\sqrt{n}} \left( E\left(\max_{i} \abs{Z_i}\right)^k\right)^{1/k} +\frac{1}{n+1}\left(E\abs{\beta_n(1)}^k\right)^{1/k}+ 3 \left(E \Delta_1^k\right)^{1/k}   \right],\\
&\le \frac{2}{\sqrt[3]{n}} + \frac{2}{\sqrt{n}} \left[ E\left(\max_{i} \abs{Z_i}\right)^k\right]^{1/k} + \frac{2}{n+1}\left(E\abs{\beta_n(1)}^k\right)^{1/k}+ 6 \left( E \Delta_1^k\right)^{1/k}.
\end{split}
\en

Let $\mu_n=E\max_i \abs{Z_i}$. It is well-known that $\mu_n=O(\sqrt{\log n})$. From the Gaussian concentration of measure we know that the probability density of $\abs{\max_i \abs{Z_i} - \mu_n}$ has a sub-Gaussian tail with variance $1$. Thus, from known moments of the standard Normal distribution we infer that
\[
\begin{split}
\left(E\left(\max_{i} \abs{Z_i}\right)^k\right)^{1/k} &\le \mu_n + \left( E\abs{\max_i \abs{Z_i} - \mu_n}^k\right)^{1/k} \le \mu_n + C^* \sqrt{k},\\
\left(E\abs{\beta_n(1)}^k\right)^{1/k} &\le C^* \sqrt{k},
\end{split}
\]
where $C^*$ is some absolute constant. 

Further, note that $\Delta_1=\sup_s\abs{G_n(s) - B_n(s)}$. Now
\[
\sqrt{n} \Delta_1 \le a\log n + \left(\Delta_1 - a \log n  \right)^+,
\] 
where $x^+= x1\{ x>0\}$. Again using the triangle inequality for the $k$th norm and the bound from \eqref{eq:kmtbnd}, we get
\[
\begin{split}
\sqrt{n} \left( E \Delta_1^k\right)^{1/k} &\le a \log n + \left[ k K \int_0^\infty x^{k-1} e^{-\lambda x} dx\right]^{1/k}= a \log n +  K^{1/k} \lambda^{-1} (k!)^{1/k}.
\end{split} 
\]
Thus
\[
\left[  E \Delta_1^k \right]^{1/k} \le  \frac{a\log n}{\sqrt{n}} +  \frac{O(k)}{\sqrt{n}}.
\]
Substituting in \eqref{eq:triangle} we get our desired result. 
\end{proof}

\begin{proof}[Proof of Theorem \ref{thm:main}]
From the last theorem we know that 
\eq\label{eq:poitoexp}
\left(  E D_n^{k} \right)^{1/k} \le C_1\left( \frac{\log n}{\sqrt{n}} \right) + C_2 \left( \frac{k}{\sqrt{n}} \right). 
\en
From here one can estimate the deviation of moments. By elementary calculus, 
\eq\label{eq:calcul}
 \abs{ (1+\epsilon)^k - 1} \le 2k \abs{\epsilon}, \quad \text{for all}\;  \abs{\epsilon} \le \frac{1}{2k}.
\en

Choose $\epsilon$ such that 
\[
(1+\epsilon)= \left[ E\left(  M_n \right)^{k}\right]^{1/k} / \left[ E\left( A_n \right)^{k} \right]^{1/k}.
\]
Clearly then
\[
\abs{\epsilon}\le \frac{\abs{\left[ E\left(  M_n \right)^{k}\right]^{1/k} -  \left[ E\left( A_n \right)^{k} \right]^{1/k}} }{\left[ E\left( A_n \right)^{k} \right]^{1/k}}\le \frac{\left[E \left(D_n\right)^k\right]^{1/k}}{\left[ E\left( A_n \right)^{k} \right]^{1/k}}.
\]

Note that $A_n$, being the area under a standard Brownian excursion, has a law that does not depend on $n$. From \eqref{eq:momentasymp} we claim the existence of another absolute constant $C_3 >0$ such that for all $k \in \NN$, 
\[
\left[ E\left( A_n \right)^{k} \right]^{1/k} \ge C_3 \sqrt{k}. 
\]
Thus
\[
\abs{\epsilon}\le O\left(\frac{\log n}{\sqrt{kn}}  \right) + O\left( \sqrt{\frac{k}{n}} \right). 
\]

Thus $k\epsilon$ converges to zero for all sequences such that $k=o(\sqrt[3]{n})$. This finishes the proof. 
\end{proof}

\begin{proof}[Proof of Theorem \ref{thm:intro}]
To finish the proof of Theorem \ref{thm:intro} we simply need to argue that when $k=o(\sqrt[3]{n})$, then 
\[
\lim_{n\rightarrow \infty} k!\frac{E^*\left[ \combi{M}{k} \right]}{E^*\left(M^k\right)}=1
\]
and use \eqref{eq:spencer}. However, this follows from elementary bounds since $M=O(n^{3/2})$ and $k=o(\sqrt[3]{n})$. We skip the details.
\end{proof}

\bibliographystyle{alpha}

\end{document}